\documentclass[leqno,11pt]{scrartcl}
\usepackage[utf8]{inputenc}

\usepackage{amsmath,amssymb,amsthm}
\usepackage{amscd}
\usepackage{setspace} 
\usepackage{enumerate}
\usepackage{array}
\usepackage{tabularx}
\usepackage{multirow}
\usepackage{booktabs}

\pagestyle{plain}
\theoremstyle{definition}

\newtheorem{remark}{Remark}

\theoremstyle{plain}
\newtheorem{theorem}{Theorem}
\newtheorem*{lemmaon}{Lemma}

\newtheorem{corollary}{Corollary}
\newtheorem*{proposition}{Proposition}
\theoremstyle{remark}

\newcommand{\C}{\mathbb{C}}

\newcommand{\N}{\mathbb{N}}

\newcommand{\Q}{\mathbb{Q}}
\newcommand{\R}{\mathbb{R}}
\newcommand{\Z}{\mathbb{Z}}
\newcommand{\trace}{\operatorname{trace}\,}

\newcommand{\oh}{\mathcal{\scriptstyle{O}}}

\let\leq\leqslant

\begin{document}

\begin{center}
\begin{huge}
\begin{spacing}{1.0}
\textbf{The maximal discrete extension of $SL_2(\oh_K)$ for an imaginary quadratic number field $K$}  
\end{spacing}
\end{huge}

\bigskip
by
\bigskip

\begin{large}
\textbf{Aloys Krieg, Joana Rodriguez, Annalena Wernz\footnote{Aloys Krieg, Joana Rodriguez, Annalena Wernz,\\ Lehrstuhl A für Mathematik, RWTH Aachen University, D-52056 Aachen \\ krieg@rwth-aachen.de, joana.rodriguez@rwth-aachen.de, annalena.wernz@rwth-aachen.de}}
\end{large}
\vspace{0.5cm}\\
\vspace{1cm}
\end{center}
\begin{abstract}
Let $\oh_K$ be the ring of integers of an imaginary quadratic number field $K$. 
In this paper we give a new description of the maximal discrete extension of the group $SL_2(\oh_K)$ inside $SL_2(\C)$, which uses generalized Atkin-Lehner involutions. Moreover we find a natural characterization of this group in $SO(1,3)$.
\end{abstract}

\newpage

Throughout this paper let 
\[
 K = \Q(\sqrt{-m})\subset \C, \;m\in\N \;\text{squarefree},
\]
be an imaginary quadratic number field. Its discriminant and ring of integers are 
\[
 d_K = \begin{cases}
        -m \\ -4m
       \end{cases}
\quad \text{and}\quad \oh_K = \begin{cases}
                               \Z+\Z(1+\sqrt{-m})/2, & \text{if}\;\,m\equiv 3\!\!\!\pmod{4},  \\
                               \Z+\Z\sqrt{-m}, & \text{if}\;\, m\equiv 1,2 \!\!\!\pmod{4}.
                              \end{cases}
\]
Denote by
\[
 \Gamma_K := SL_2(\oh_K)
\]
the group of integral $2\times 2$ matrices of determinant $1$ and by 
\[
 \Gamma^*_K \leq SL_2(\C)
\]
its maximal discrete extension in $SL_2(\C)$ according to \cite{EGM}, chap. 7.4. If $\langle \alpha_1,\ldots,\alpha_n\rangle$ denotes the $\oh_K$-module generated by $\alpha_1,\ldots,\alpha_n$ and $\sqrt{\cdot}$ stands for an arbitrary (fixed) branch of the square root, we obtain a description of $\Gamma^*_K$ from \cite{EGM}, chap. 7.4.

\begin{proposition}\label{Propos_1} 
For the imaginary quadratic number field $K$, one has 
\[
 \Gamma^*_K = \left\{\frac{1}{\sqrt{\alpha\delta-\beta \gamma}} \begin{pmatrix}
                                                                    \alpha & \beta \\ \gamma & \delta
                                                                   \end{pmatrix};
\;\alpha,\beta,\gamma,\delta \in\oh_K,\,\langle \alpha \delta-\beta \gamma\rangle = \langle \alpha,\beta, \gamma,\delta\rangle \neq \{0\}\right\}
\]
satisfying
\[
 [\Gamma^*_K:\Gamma_K] = 2^{\nu}, \; \nu = \sharp\{p;\;p\;\text{prime},\; p\mid d_K\}.
\]
\end{proposition}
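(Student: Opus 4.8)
The plan is to obtain both assertions from the structure theory of the maximal discrete extension developed in \cite{EGM}, chap.~7.4, after rewriting the set on the right-hand side --- call it $\Gcal$ --- in local terms. For $M\in\Gcal$ fix the integral representative $\widetilde M\in M_2(\oh_K)$ with entries $\alpha,\beta,\gamma,\delta$, put $n=\alpha\delta-\beta\gamma$ and $\mathfrak a=\langle\alpha,\beta,\gamma,\delta\rangle$, so that $M=\widetilde M/\sqrt n$ and the defining condition reads $\mathfrak a^2=\langle n\rangle$. Passing to the localization of $\oh_K$ at a prime $\mathfrak p$ and using Smith normal form, this condition is equivalent to the statement that at every $\mathfrak p$ the matrix $\widetilde M$ is a scalar times a matrix of $GL_2(\oh_{K,\mathfrak p})$. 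I would draw three consequences. (i) The pair $(\widetilde M,n)$ is determined by $M$ up to simultaneous scaling by $(u,u^2)$ with $u\in\oh_K^\times$; in particular the class $[\mathfrak a]$ in the ideal class group $\mathrm{Cl}(K)$ depends on $M$ alone. (ii) The class of such locally scalar integral matrices is closed under products and under $\widetilde M\mapsto\operatorname{adj}\widetilde M$, on it the content ideal is multiplicative, and right translation by $\Gamma_K=SL_2(\oh_K)$ alters neither $\mathfrak a$ nor $\langle n\rangle$; hence $\Gcal$ is a subgroup of $SL_2(\C)$ containing $\Gamma_K$. (iii) For $M\in\Gcal$ and $g\in\Gamma_K$ one has $MgM^{-1}=\tfrac1n\,\widetilde M\,g\,\operatorname{adj}\widetilde M$, and the local description shows that $n$ divides every entry of $\widetilde M\,g\,\operatorname{adj}\widetilde M$, so $MgM^{-1}\in SL_2(\oh_K)$; thus $\Gcal$ normalizes $\Gamma_K$. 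Since the normalizer of a lattice in $SL_2(\C)$ is discrete (its identity component acts trivially by conjugation on the Zariski-dense $\Gamma_K$, hence lies in the center and is trivial), $\Gcal$ is a discrete overgroup of $\Gamma_K$ and therefore $\Gcal\subseteq\Gamma^*_K$. The reverse inclusion $\Gamma^*_K\subseteq\Gcal$ is precisely what \cite{EGM}, chap.~7.4 supplies, where $\Gamma^*_K$ is produced from $\Gamma_K$ together with the generalized Atkin-Lehner elements attached to the ramified primes and the normalizing diagonal element, each of which visibly belongs to $\Gcal$. This proves the description.

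For the index I would count cosets through the map
\[
 \varphi\colon\Gamma^*_K\longrightarrow\mathrm{Cl}(K),\qquad \varphi(M)=[\mathfrak a],
\]
which is well defined by (i) and is a homomorphism because on locally scalar matrices the content is multiplicative, $\mathfrak a(\widetilde M\widetilde N)=\mathfrak a(\widetilde M)\,\mathfrak a(\widetilde N)$. Since $\mathfrak a^2=\langle n\rangle$ is principal, $\im\varphi\subseteq\mathrm{Cl}(K)[2]$. Conversely, for each prime $p\mid d_K$, writing $(p)=\mathfrak p^2$, one constructs an integral matrix whose content ideal is $\mathfrak p$ and whose determinant generates $\mathfrak p^2$ --- an Atkin-Lehner matrix for $\mathfrak p$ --- so that $[\mathfrak p]\in\im\varphi$; by genus theory for imaginary quadratic fields these classes generate all of $\mathrm{Cl}(K)[2]$, a group of order $2^{\nu-1}$, where $\nu$ is the number of prime divisors of $d_K$ as in the statement. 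Hence $\im\varphi=\mathrm{Cl}(K)[2]$ and $[\Gamma^*_K:\ker\varphi]=2^{\nu-1}$.

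Finally I would identify $\ker\varphi$. If $[\mathfrak a]=1$ then $\mathfrak a=(t)$ is principal, and dividing $\widetilde M$ by $t$ presents $M$ as $\lambda N$ with $N\in GL_2(\oh_K)$, $\det N=\varepsilon\in\oh_K^\times$ and $\lambda^2=\varepsilon^{-1}$; the assignment $M\mapsto\varepsilon\,(\oh_K^\times)^2$ is then a well-defined surjective homomorphism $\ker\varphi\to\oh_K^\times/(\oh_K^\times)^2$ with kernel $\Gamma_K$ (if $\varepsilon$ is a square it is absorbed into $N$ so that $\lambda N\in SL_2(\oh_K)$, and $-I\in\Gamma_K$ takes care of the sign of $\lambda$). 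As $\oh_K^\times$ is finite cyclic of even order in every imaginary quadratic field, $\oh_K^\times/(\oh_K^\times)^2\cong\Z/2\Z$, so $[\ker\varphi:\Gamma_K]=2$ and
\[
 [\Gamma^*_K:\Gamma_K]=[\Gamma^*_K:\ker\varphi]\,[\ker\varphi:\Gamma_K]=2^{\nu-1}\cdot2=2^{\nu}.
\]
The one genuinely substantive ingredient --- everything else being bookkeeping with localizations and unit groups --- is the identification $\im\varphi=\mathrm{Cl}(K)[2]$ of order $2^{\nu-1}$: this is exactly genus theory for imaginary quadratic fields, which may be quoted directly or recovered from the explicit Atkin-Lehner matrices together with the single relation that $\prod_{p\mid d_K}\mathfrak p$ is principal.
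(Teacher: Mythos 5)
The paper offers no argument for this Proposition at all: it is quoted verbatim from \cite{EGM}, chap.~7.4. Your write-up is therefore necessarily a different route, and it is essentially correct and substantially more informative, with two caveats. (1) You silently replaced the printed condition $\langle\alpha\delta-\beta\gamma\rangle=\langle\alpha,\beta,\gamma,\delta\rangle$ by $\langle\alpha\delta-\beta\gamma\rangle=\langle\alpha,\beta,\gamma,\delta\rangle^{2}$. This is the right reading: since $\alpha\delta-\beta\gamma$ always lies in $\langle\alpha,\beta,\gamma,\delta\rangle^{2}$, the unsquared equality would force $\mathfrak{a}=\mathfrak{a}^{2}=\oh_K$ and yield index at most $2$, contradicting $2^{\nu}$, and the paper's own matrices $V_d$ have content $\mathfrak{a}$ with $\mathfrak{a}^{2}=\langle d\rangle\neq\mathfrak{a}$ for $d>1$ --- but the correction should be stated explicitly rather than assumed. (2) The inclusion $\Gamma^*_K\subseteq\Gcal$ is still delegated to \cite{EGM} (a generating set of $\Gamma^*_K$ whose members lie in $\Gcal$), so the genuinely hard direction is cited, not proved; there you are on the same footing as the paper. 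What you add is real: the local reformulation (at each prime the integral representative is a scalar times a unit of $M_2(\oh_{K,\mathfrak{p}})$) cleanly yields that $\Gcal$ is a group normalizing $\Gamma_K$, that the normalizer of a lattice in $SL_2(\C)$ is discrete, and hence $\Gcal\subseteq\Gamma^*_K$; and the index is computed from scratch via the homomorphism $\varphi\colon\Gamma^*_K\to\mathrm{Cl}(K)[2]$, whose image has order $2^{\nu-1}$ by genus theory and whose kernel satisfies $\ker\varphi/\Gamma_K\cong\oh_K^\times/(\oh_K^\times)^{2}\cong C_2$. I checked the delicate points --- well-definedness of $[\mathfrak{a}]$ (the integral representative is only unique up to scaling by an element of $K^\times$, which does not change the ideal class), multiplicativity of the content on locally scalar matrices, and the unit argument including the exceptional fields $\Q(i)$ and $\Q(\sqrt{-3})$ --- and they all hold. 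The paper's citation is shorter; your version actually explains where $2^{\nu}$ comes from: one factor $2$ from $\oh_K^\times/(\oh_K^\times)^2$ and $2^{\nu-1}$ from the $2$-torsion of the class group generated by the ramified primes.
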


The main aim of this note ist to give an alternative description of $\Gamma^*_K$ in terms of generalized Atkin-Lehner involutions, which are more familiar in the theory of modular forms, as well as a characterization of $\Gamma^*_K$ inside the orthogonal group $SO(1,3)$.

Let $\omega:=m+\sqrt{-m}$. Then we see that $\gcd(d,\omega\overline{\omega}/d)=1$ for each squarefree divisor $d$ of $d_K$. Hence there are $u,v\in\Z$ such that
\begin{gather*}\tag{1}\label{gl_1}
 ud-v\omega\overline{\omega}/d=1, \quad \text{i.e.}\quad V_d:=\frac{1}{\sqrt{d}} \begin{pmatrix}
                                                                                 ud & v\omega \\ \overline{\omega} & d
                                                                                \end{pmatrix}
\in SL_2(\C).
\end{gather*}
A straightforward verification shows that the coset
\begin{gather*}\tag{2}\label{gl_2}
 \Gamma_K V_d = V_d \Gamma_K \subseteq \frac{1}{\sqrt{d}}\, \oh^{2\times 2}_K \cap SL_2(\C)
\end{gather*}
is independent of the particular choice of $u$ and $v$, hence well-defined by $d$. Moreover one has for squarefree divisors $d$ and $e$ of $d_K$
\begin{align*}
  V^2_d & \in \Gamma_K, \tag{3}\label{gl_3} \\[1ex]
\end{align*}
\begin{gather*}\tag{4}\label{gl_4}
 V_d\cdot V_e \in \Gamma_K V_f,\quad f= de/\gcd(d,e)^2.
\end{gather*}
Hence $V_d$ can be viewed as a generalization of the Atkin-Lehner involution (cf. \cite{AL}, sect. 2).

\begin{theorem}\label{Theorem_1}
For the imaginary quadratic number field $K$, the group $\Gamma^*_K$ admits the description 
\[
 \Gamma^*_K = \bigcup_{d\mid d_K,\,d\,\text{squarefree}} \Gamma_K V_d.
\]
\end{theorem}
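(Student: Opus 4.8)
The plan is to show two inclusions, using Proposition~\ref{Propos_1} as the characterization of $\Gamma^*_K$. For the inclusion $\bigcup_{d\mid d_K}\Gamma_K V_d \subseteq \Gamma^*_K$, I would simply check that each $V_d$ satisfies the defining condition in Proposition~\ref{Propos_1}: writing $V_d = \frac{1}{\sqrt d}\left(\begin{smallmatrix} ud & v\omega\\ \overline\omega & d\end{smallmatrix}\right)$, the ``determinant'' entry (before normalization) is $ud^2 - v\omega\overline\omega = d(ud - v\omega\overline\omega/d) = d$ by \eqref{gl_1}, so I must verify $\langle d\rangle = \langle ud, v\omega, \overline\omega, d\rangle$ as $\oh_K$-modules. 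Clearly $d$ divides each generator in $\oh_K$? No --- rather, $\langle ud,v\omega,\overline\omega,d\rangle \supseteq \langle d, \overline\omega\rangle$, and since $d \mid d_K = -\omega\overline\omega/(\text{unit})$ roughly, one shows $\gcd(d,\overline\omega) $ generates $\langle d\rangle$; the relation $ud - v\omega\overline\omega/d = 1$ combined with $d\mid\omega\overline\omega$ gives that $\langle d,\overline\omega\rangle$ contains $d$ and is contained in $\langle d\rangle$ after checking norms, so the module equals $\langle d\rangle$. Then since $\Gamma_K\subseteq\Gamma^*_K$ and $\Gamma^*_K$ is a group containing $V_d$, the whole coset lies in $\Gamma^*_K$.

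For the reverse inclusion, the key point is that the cosets $\Gamma_K V_d$ for squarefree $d\mid d_K$ form a subgroup of $\Gamma^*_K/\Gamma_K$-type structure: relations \eqref{gl_3} and \eqref{gl_4} show that $\{\Gamma_K V_d : d\mid d_K \text{ squarefree}\}$ is closed under multiplication (the product law $f = de/\gcd(d,e)^2$ is exactly the group law on squarefree divisors, isomorphic to $(\Z/2\Z)^\nu$), and contains inverses by \eqref{gl_3}. Hence $G := \bigcup_d \Gamma_K V_d$ is genuinely a subgroup of $SL_2(\C)$ with $\Gamma_K \trianglelefteq G$ (normality from $\Gamma_K V_d = V_d\Gamma_K$ in \eqref{gl_2}) and $[G:\Gamma_K] = 2^\nu$, since the $2^\nu$ squarefree divisors of $d_K$ give distinct cosets --- distinctness follows because $\Gamma_K V_d \subseteq \frac{1}{\sqrt d}\oh_K^{2\times2}$ forces different denominators, or more carefully, if $\Gamma_K V_d = \Gamma_K V_e$ then $V_dV_e^{-1}\in\Gamma_K$ forces $f = 1$ in \eqref{gl_4}, i.e. $d = e$.

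Now since $G\subseteq\Gamma^*_K$ (first inclusion) and $[G:\Gamma_K] = 2^\nu = [\Gamma^*_K:\Gamma_K]$ (Proposition~\ref{Propos_1}), finiteness of the index forces $G = \Gamma^*_K$. I expect the main obstacle to be the careful verification in the first inclusion that the $\oh_K$-module condition $\langle d\rangle = \langle ud, v\omega, \overline\omega, d\rangle$ holds --- this requires genuinely using that $d$ is a squarefree divisor of the discriminant $d_K$ and the coprimality $\gcd(d,\omega\overline\omega/d) = 1$, and handling the two cases $m\equiv 3\pmod 4$ versus $m\equiv 1,2\pmod 4$ for $\oh_K$ (equivalently $\omega = m + \sqrt{-m}$ and $\omega\overline\omega = m^2 + m = m(m+1)$, whose relation to $|d_K|$ differs between the cases). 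A secondary point worth stating cleanly is the well-definedness claims \eqref{gl_2}, \eqref{gl_3}, \eqref{gl_4}, which the excerpt asserts follow from ``straightforward verification''; I would either cite these as given or dispatch them quickly, since the real content of the theorem is the index-counting argument combining them with Proposition~\ref{Propos_1}.
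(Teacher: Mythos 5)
Your second half is essentially the paper's argument: relations \eqref{gl_2}--\eqref{gl_4} make $G:=\bigcup_d\Gamma_KV_d$ a group containing $\Gamma_K$, and then $[G:\Gamma_K]=2^{\nu}=[\Gamma^*_K:\Gamma_K]$ forces $G=\Gamma^*_K$. Your observation that the $2^{\nu}$ cosets are distinct (via $V_dV_e^{-1}\in\Gamma_K\Rightarrow f=1\Rightarrow d=e$) is actually a point the paper leaves implicit, and is worth spelling out. Where you diverge is the inclusion $G\subseteq\Gamma^*_K$: the paper never verifies the membership criterion of the Proposition for $V_d$. It instead notes that $G$ is a group containing the discrete group $\Gamma_K$ with finite index, hence is itself discrete, and is therefore contained in the \emph{maximal} discrete extension $\Gamma^*_K$ by the very definition of that group. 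This is shorter and completely sidesteps the ideal-theoretic computation that you correctly identify as the main obstacle in your route.

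That computation, as you sketch it, does not go through. With $\alpha=ud$, $\beta=v\omega$, $\gamma=\overline{\omega}$, $\delta=d$ one has $\alpha\delta-\beta\gamma=d$, but $\langle ud,v\omega,\overline{\omega},d\rangle$ is \emph{not} $\langle d\rangle$: it is the product of the ramified prime ideals lying over the rational primes dividing $d$, and only its \emph{square} equals $\langle d\rangle$. Concretely, for $K=\Q(i)$, $d=2$, $u=v=1$ one gets $\langle 2,\,1+i,\,1-i,\,2\rangle=\langle 1+i\rangle\neq\langle 2\rangle$; your intermediate claim that $\langle d,\overline{\omega}\rangle$ is contained in $\langle d\rangle$ ``after checking norms'' is false, since $\overline{\omega}=m-\sqrt{-m}$ is divisible by $\sqrt{-m}$ but not by $d$. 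The criterion that actually holds for these matrices (and is the form of the statement in \cite{EGM}) is $\langle\alpha,\beta,\gamma,\delta\rangle^{2}=\langle\alpha\delta-\beta\gamma\rangle$; the Proposition as printed in the paper appears to have dropped the square, and with the literal printed condition your verification cannot succeed for any $d>1$. So either prove $\langle d,\overline{\omega},v\omega,ud\rangle^{2}=\langle d\rangle$ (which does follow from $\gcd(d,\omega\overline{\omega}/d)=1$ and the ramification of the primes dividing $d\mid d_K$), or --- better --- replace your first inclusion by the paper's maximality argument, which needs none of this.
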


\begin{proof}
Denote the right-hand side by $G$. Given $M,N\in\Gamma_K$ and squarefree divisors $d,e$ of $d_K$ we obtain
\[
 MV_d\cdot NV_e \in \Gamma_K(V_d V_e) = \Gamma_K V_f
\]
due to \eqref{gl_2} and \eqref{gl_4}. Moreover \eqref{gl_3} and \eqref{gl_2} imply
\[
 (M V_d)^{-1} = V^{-1}_d M^{-1} \in V_d \Gamma_K = \Gamma_K V_d.
\]
Therefore $G$ is a group, which contains $\Gamma_K = \Gamma_K V_1$. As $\Gamma_K$ is a discrete subgroup of $SL_2(\C)$, this is also true for $G$. Thus we have $G\subseteq \Gamma^*_K$ as well as
\[
 [G:\Gamma_K] = [\Gamma^*_K:\Gamma_K] = 2^{\nu}
\]
due to the Proposition, hence $G = \Gamma^*_K$ follows.
\end{proof}

As this is the main result we sketch a direct proof. Start with a matrix $M$ in a discrete subgroup $G$ of $SL_2(\C)$ containing $\Gamma_K$. We conclude that $M\in r\oh^{2\times 2}_K$ for some $r\in\R$, $r>0$, just as in \cite{EGM}, chap. 7, Proposition 4.3. In view of $\det M = 1$ we may choose $r=1/\sqrt{d}$ for some $d\in\N$. Moreover the minimal possible $d$ turns out to be a squarefree divisor of $d_K$. Considering $MV^{-1}_d$ we end up with the case that $G$ is a subgroup of $SL_2(K)$, which was covered in \cite{EGM}, chap.7, Lemma 4.4.

\begin{remark}\label{Remark_1}
a) From Theorem \ref{Theorem_1} it is clear that $\Gamma_K$ is normal in $\Gamma^*_K$ and that the factor group $\Gamma^*_K/\Gamma_K$ is isomorphic to $C^{\nu}_2$. \\
b) If $d\,|\,d_K$, $d>1$ is squarefree, then the coefficients of the matrices in $\Gamma_K V_d$ in \eqref{gl_2} are algebraic integers of the biquadratic number field $\Q(\sqrt{d},\sqrt{-m})$. \\
c) There is no maximal discrete extension of $\Gamma_K$ inside $GL_2(\C)$ as the series of groups
\[
 \{e^{2\pi ij/n}M;\; j=0,\ldots,n-1,\,M\in\Gamma_K\}, \;\; n\in\N,
\]
shows.
\end{remark}


We want to give a natural characterization of the groups above in $SO(1,3)$. Therefore we consider the $4$-dimensional $\R$-vector space
\[
 V:=\{H\in \C^{2\times 2};\;H= \overline{H}^{tr}\}
\]
of Hermitian $2\times 2$ matrices over $\C$. Then
\[
 q:V\to \R, \quad H\mapsto \det H,
\]
is a quadratic form on $V$ of signature $(1,3)$. Let $SO(V,q)$ denote the attached special orthogonal group. Moreover let $SO_0(V,q)$ stand for the connected component of the identity element. It can be characterized by the fact that it maps the cone of positive definite matrices in $V$ onto itself. More precisely one has
\[
 SO_0(V,q)=\{\varphi\in SO(V,q); \;\trace \varphi(E)> 0\},\quad E = \begin{pmatrix}
                                 1 & 0 \\ 0 & 1
                                \end{pmatrix}.
\]
From \cite{Hn}, \S4, sect. 14, we quote the

\begin{lemmaon}\label{Lemma}
The mapping
\begin{align*}
 & \phi: SL_2(\C) \to SO_0(V,q),\;\; M\mapsto \phi_M,  \\
 & \phi_M: V\to V, \;\; H\mapsto MH\overline{M}^ {tr},
\end{align*}
is a surjective homomorphism of groups with kernel $\{\pm E\}$.
\end{lemmaon}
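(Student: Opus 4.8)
The verification falls into a routine part — well-definedness, the homomorphism law, and the kernel — and one substantive point, surjectivity, which I would obtain from a dimension count.

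First I would check that $\phi$ is well-defined with image in $SO_0(V,q)$. For $H\in V$ the matrix $\phi_M(H)=MH\overline{M}^{tr}$ is again Hermitian, since $\overline{MH\overline{M}^{tr}}^{tr}=M\overline{H}^{tr}\overline{M}^{tr}=MH\overline{M}^{tr}$, and $\phi_M$ is visibly $\R$-linear. Because $\det M=1$ we get $q(\phi_M(H))=\det(MH\overline{M}^{tr})=|\det M|^2\det H=q(H)$, so $\phi_M\in O(V,q)$. Associativity of matrix multiplication together with $\overline{MN}^{tr}=\overline{N}^{tr}\,\overline{M}^{tr}$ gives $\phi_{MN}=\phi_M\circ\phi_N$, and $\phi_E=\mathrm{id}$, so $\phi$ is a group homomorphism. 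Finally, the image of the connected group $SL_2(\C)$ under the continuous map $\phi$ is connected and contains $\mathrm{id}$, hence lies in $SO_0(V,q)$; this also matches the stated criterion, as $\trace\phi_M(E)=\trace(M\overline{M}^{tr})=\sum_{i,j}|m_{ij}|^2>0$.

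For the kernel, I would use that every $A\in\C^{2\times2}$ is a $\C$-linear combination of Hermitian matrices, namely $A=\tfrac12(A+\overline{A}^{tr})+i\cdot\tfrac1{2i}(A-\overline{A}^{tr})$, while $H\mapsto MH\overline{M}^{tr}$ is $\C$-linear on all of $\C^{2\times2}$. Hence $\phi_M=\mathrm{id}$ forces $MA\overline{M}^{tr}=A$ for every $A$; taking $A=E$ gives $\overline{M}^{tr}=M^{-1}$, so that $MA=AM$ for all $A$, whence $M=\lambda E$ with $\lambda\in\C$, and $\det M=1$ yields $\lambda=\pm1$. Thus $\ker\phi=\{\pm E\}$.

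The main obstacle is surjectivity. I would argue that $\phi$ is an open map: its differential $d\phi_E\colon\mathfrak{sl}_2(\C)\to\mathfrak{so}(V,q)$ has trivial kernel because $\ker\phi=\{\pm E\}$ is discrete, and both Lie algebras have real dimension $6$, so $d\phi_E$ is an isomorphism and $\phi$ is a local diffeomorphism at $E$, hence — by translation — everywhere. Then $\phi(SL_2(\C))$ is an open, therefore closed, subgroup of the connected group $SO_0(V,q)$, so it equals $SO_0(V,q)$. A more elementary alternative avoiding Lie theory: compute that $\phi(SU(2))$ is precisely the rotation subgroup of $SO_0(V,q)$ fixing $E\in V$ and that the diagonal matrices $\diag(e^{t/2},e^{-t/2})$ map to a one-parameter group of boosts, and then conclude using the polar decomposition on $SL_2(\C)$ together with the $KAK$ decomposition of $SO_0(1,3)$. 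The only genuine work in either route is to pin down $\mathfrak{so}(V,q)$, respectively the Lorentz action, in explicit coordinates; nothing deeper is involved.
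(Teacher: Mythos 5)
Your proof is correct, but there is nothing in the paper to compare it against: the authors do not prove this Lemma, they quote it without proof from the reference ([Hn], \S 4, sect.~14), since it is the classical statement that $SL_2(\C)$ double-covers the proper orthochronous Lorentz group. Judged on its own merits, your write-up is sound. The routine parts (Hermiticity of $MH\overline{M}^{tr}$, $q(\phi_M(H))=|\det M|^2\det H$, the homomorphism law, connectedness of the image forcing it into $SO_0(V,q)$, and the check $\trace \phi_M(E)=\sum_{i,j}|m_{ij}|^2>0$ matching the paper's explicit criterion for $SO_0$) are all fine. The kernel argument via complexification --- every $A\in\C^{2\times 2}$ is a $\C$-linear span of Hermitian matrices, so $\phi_M=\mathrm{id}$ propagates to $MA\overline{M}^{tr}=A$ for all $A$, whence $M$ is unitary and central --- is the cleanest standard route. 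For surjectivity, your Lie-theoretic argument is valid: for a homomorphism of Lie groups the kernel of $d\phi_E$ is the Lie algebra of $\ker\phi$, which is trivial since $\{\pm E\}$ is discrete, and $\dim_{\R}\mathfrak{sl}_2(\C)=6=\dim\mathfrak{so}(1,3)$, so $\phi$ is open and its image is an open, hence closed, subgroup of the connected group $SO_0(V,q)$. The only caveat is that your ``elementary alternative'' via $SU(2)$, the boosts $\diag(e^{t/2},e^{-t/2})$, and polar/$KAK$ decompositions is only sketched; carrying it out requires the explicit identification of the stabilizer of $E$ in $SO_0(V,q)$ with $SO(3)$ acting on the trace-zero Hermitian matrices, which is routine but not free. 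Either route is acceptable and fully establishes the statement the paper merely cites.
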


Now we consider the situation over the rational numbers $\Q$. Let $V_K:= V\cap K^ {2\times 2}$ denote the associated $\Q$-vector space of dimension $4$ and 
\begin{align*}
 \widehat{\Sigma}_K: & = \bigl\{\varphi\in SO_0(V,q);\;\varphi(V_K) = V_K\bigr\},  \\
 \widehat{\Gamma}_K: & = \bigl\{\tfrac{1}{\sqrt{d}}M\in SL_2(\C);\;d\in \N,\,M\in\oh^{2\times 2}_K\bigr\}\supseteq SL_2(K).
\end{align*}

\begin{corollary}\label{Corollary_1}
For the imaginary quadratic number field $K$, one has 
\[
 \phi(\widehat{\Gamma}_K) = \widehat{\Sigma}_K.
\]
\end{corollary}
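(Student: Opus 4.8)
The plan is to prove the two inclusions of $\phi(\widehat{\Gamma}_K)=\widehat{\Sigma}_K$ separately. For the inclusion $\phi(\widehat{\Gamma}_K)\subseteq\widehat{\Sigma}_K$ I would first record that $\widehat{\Gamma}_K$ is a subgroup of $SL_2(\C)$: products are handled by $\tfrac{1}{\sqrt d}M\cdot\tfrac{1}{\sqrt e}N=\tfrac{1}{\sqrt{de}}MN$, and if $\tfrac{1}{\sqrt d}M\in SL_2(\C)$ then $\det M=d$, so $(\tfrac{1}{\sqrt d}M)^{-1}=\tfrac{1}{\sqrt d}\operatorname{adj}M\in\widehat{\Gamma}_K$. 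For $\tfrac{1}{\sqrt d}M\in\widehat{\Gamma}_K$ and $H\in V_K$ the matrix $\phi_{M/\sqrt d}(H)=\tfrac1d\,MH\,\overline{M}^{tr}$ is Hermitian with all entries in $K$, hence lies in $V_K$; applying this also to $(\tfrac{1}{\sqrt d}M)^{-1}$ gives $\phi_{M/\sqrt d}(V_K)=V_K$, so $\phi_{M/\sqrt d}\in\widehat{\Sigma}_K$ by the Lemma. Thus $G:=\phi(\widehat{\Gamma}_K)$ is a subgroup of $\widehat{\Sigma}_K$, and, since $\widehat{\Sigma}_K$ is visibly a group, the reverse inclusion is what really has to be done.

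For the reverse inclusion let $\varphi\in\widehat{\Sigma}_K$. Since $\varphi\in SO_0(V,q)$, the Hermitian matrix $P:=\varphi(E)$ is positive definite, lies in $V_K$, and has $\det P=q(E)=1$; writing $P=\bigl(\begin{smallmatrix}p&z\\\bar z&s\end{smallmatrix}\bigr)$ we get $p,s\in K\cap\R=\Q$ with $p,s>0$ and $z\in K$. From $ps-|z|^2=1$ one has the Cholesky-type identity $P=L\,\overline{L}^{tr}$ with $L:=\bigl(\begin{smallmatrix}\sqrt p&0\\\bar z/\sqrt p&1/\sqrt p\end{smallmatrix}\bigr)\in SL_2(\C)$, and clearing the denominators of $p$ and $z$ exhibits $L\in\widehat{\Gamma}_K$, so $\phi_L\in G$. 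Moreover $\phi_{L^{-1}}\circ\varphi\in\widehat{\Sigma}_K$ fixes $E$, since $\phi_{L^{-1}}(P)=L^{-1}P\,(\overline{L}^{tr})^{-1}=L^{-1}(L\,\overline{L}^{tr})(\overline{L}^{tr})^{-1}=E$. As $G$ is a group and $\varphi=\phi_L\circ(\phi_{L^{-1}}\circ\varphi)$ with $\phi_L\in G$, it suffices to prove $\varphi\in G$ under the extra hypothesis $\varphi(E)=E$.

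So assume $\varphi(E)=E$. By the Lemma, $\varphi=\phi_M$ for some $M\in SL_2(\C)$, and $M\,\overline{M}^{tr}=E$ forces $M=\bigl(\begin{smallmatrix}\alpha&\beta\\-\bar\beta&\bar\alpha\end{smallmatrix}\bigr)$ with $|\alpha|^2+|\beta|^2=1$. Fixing $E$, the map $\varphi$ preserves $E^\perp=\{H\in V:\trace H=0\}$, hence also the three-dimensional $\Q$-subspace $W:=V_K\cap E^\perp$ consisting of the traceless Hermitian matrices over $K$. I would then evaluate $\varphi=\phi_M$ on the $\Q$-basis $\bigl(\begin{smallmatrix}1&0\\0&-1\end{smallmatrix}\bigr)$, $\bigl(\begin{smallmatrix}0&1\\1&0\end{smallmatrix}\bigr)$, $\bigl(\begin{smallmatrix}0&\sqrt{-m}\\-\sqrt{-m}&0\end{smallmatrix}\bigr)$ of $W$; since the images again lie in $V_K$ — their diagonal entries in $K\cap\R=\Q$, their off-diagonal entries in $K$ — a direct computation yields $|\alpha|^2,|\beta|^2\in\Q$, $\alpha^2\pm\beta^2\in K$, $\operatorname{Re}(\alpha\bar\beta)\in\Q$, and $\sqrt m\,\operatorname{Im}(\alpha\bar\beta)\in\Q$; the last two force $\alpha\bar\beta\in\Q+\sqrt{-m}\,\Q=K$, so altogether $\alpha^2,\beta^2,\alpha\bar\beta\in K$.

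The remaining step — the only non-formal one, and where I expect the main difficulty — is to pass from these relations, which determine $M$ only up to the quadratic ambiguity of a square root, to the statement $\alpha,\beta\in\tfrac{1}{\sqrt d}\oh_K$ for a common $d\in\N$, i.e.\ $M\in\widehat{\Gamma}_K$. This is where the hypothesis that $K$ is imaginary quadratic is used, through the fact that for $w\in K^*$ with $N_{K/\Q}(w)$ a square in $\Q^*$ one has $w\in\Q_{>0}\cdot(K^*)^2$: choosing $c\in\Q_{>0}$ with $c^2=N_{K/\Q}(w)$, Hilbert~90 gives $w/c=\bar\eta/\eta$ with $\eta\in K^*$, so $w=c\,N_{K/\Q}(\eta)\,\eta^{-2}$ and $N_{K/\Q}(\eta)=|\eta|^2>0$. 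Applied to $w=\beta^2$ (note $N_{K/\Q}(\beta^2)=(|\beta|^2)^2$), this lets one write $\beta^2=s\delta^2$ with $s=s_1/s_2\in\Q_{>0}$ in lowest terms and $\delta\in\oh_K$, hence $(s_2\beta)^2=s_1s_2\,\delta^2$ and $\beta=\pm s_1\delta/\sqrt{s_1s_2}\in\tfrac{1}{\sqrt{s_1s_2}}\oh_K$; if $\beta\neq0$ then $\alpha=(\alpha\bar\beta/|\beta|^2)\,\beta$ with $\alpha\bar\beta/|\beta|^2\in K$, so $\alpha\in\tfrac{1}{\sqrt d}\oh_K$ after enlarging $d$, while if $\beta=0$ the same argument with $w=\alpha^2$ applies. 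As $\tfrac{1}{\sqrt d}\oh_K$ is stable under complex conjugation, this gives $M\in\widehat{\Gamma}_K$, hence $\varphi=\phi_M\in G$. Combined with the reduction of the previous paragraph and the inclusion $G\subseteq\widehat{\Sigma}_K$, this yields $\phi(\widehat{\Gamma}_K)=\widehat{\Sigma}_K$.
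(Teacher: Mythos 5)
Your proof is correct, but it takes a genuinely different route from the paper's. The paper handles ``$\supseteq$'' by starting from a general $M=\left(\begin{smallmatrix}\alpha&\beta\\ \gamma&\delta\end{smallmatrix}\right)$ with $\phi_M\in\widehat{\Sigma}_K$, normalizing $\alpha\neq 0$ by replacing $M$ with $MJ$, and evaluating $\phi_M$ on a basis of $V_K$ to obtain $\alpha\overline{\alpha}\in\Q$ and $\alpha^2,\beta/\alpha,\gamma/\alpha\in K$; it then concludes via the triangular factorization
\[
 M=\begin{pmatrix}1&0\\ \gamma/\alpha&1\end{pmatrix}\begin{pmatrix}\alpha&0\\ 0&1/\alpha\end{pmatrix}\begin{pmatrix}1&\beta/\alpha\\ 0&1\end{pmatrix},
\]
whose outer factors lie in $SL_2(K)\subseteq\widehat{\Gamma}_K$, so that only the single entry $\alpha$ needs to be placed in $\tfrac{1}{\sqrt d}\oh_K$. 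You instead first split off a Cholesky factor of $\varphi(E)$ to reduce to the case $M\in SU(2)$ and then control $\alpha$ and $\beta$ separately; this costs a little more bookkeeping (you must land both entries in a common $\tfrac{1}{\sqrt d}\oh_K$, whereas the paper's triangular factors absorb $\beta$ and $\gamma$ for free), but it is equally valid. The real added value of your write-up is the explicit justification, via Hilbert 90 and the positivity of the norm on an imaginary quadratic field, of the passage from ``$\alpha^2\in K$ and $\alpha\overline{\alpha}\in\Q$'' to ``$\alpha\in\tfrac{1}{\sqrt d}\oh_K$'': the paper asserts this step with a bare ``Thus we obtain'', and precisely your Hilbert-90 argument (or an equivalent elementary computation) is what is needed to fill it in, so your last paragraph serves as a useful supplement to the published proof as well.
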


\begin{proof}
``$\subseteq$'' This is obvious due to the Lemma.  \\
``$\supseteq$'' Start with $M= \left(\begin{smallmatrix}
                                 \alpha & \beta \\ \gamma & \delta
                                \end{smallmatrix}\right)
\in SL_2(\C)$ such that $\phi_M \in \widehat{\Sigma}_K$ in accordance with the Lemma. Assume $\alpha\neq 0$, because we may replace $M$ by $MJ$, $J =
\left(\begin{smallmatrix}
0 & -1 \\ 1 & 0
\end{smallmatrix}\right)$. Calculating $\phi_M(H)$ for a suitable basis of $V_K$ leads to 
\[
 \alpha\overline{\alpha} \in \Q,\quad \alpha^2, \beta/\alpha,\gamma/\alpha \in K.
\]
Thus we obtain $\alpha \in \tfrac{1}{\sqrt{d}} \oh_K$ for a suitable $d\in\N$ and
\[
 M= \begin{pmatrix}
     1 & 0 \\ \gamma/\alpha & 1
    \end{pmatrix}
\begin{pmatrix}
     \alpha & 0 \\ 0 & 1/\alpha
    \end{pmatrix}
\begin{pmatrix}
     1 & \beta/\alpha\\ 0 & 1
    \end{pmatrix} \in \widehat{\Gamma}_K.  
\]
\vspace{-6ex}\\
\end{proof}
\vspace{2ex}


Now consider the lattice $\Lambda_K:= V_K\cap\oh^{2\times 2}_K$ of integral Hermitian matrices and the group of lattice automorphisms in $SO_0(V,q)$ 
\[
 \Sigma^*_K:=\bigl\{\varphi \in SO_0(V,q);\;\varphi(\Lambda_K) = \Lambda_K\bigr\}\leq \widehat{\Sigma}_K.
\]

Thus we get a very natural description of $\Gamma^*_K$ in this context.

\begin{theorem}\label{Theorem_2}
For the imaginary quadratic number field $K$, one has
\[
 \phi(\Gamma^*_K) = \Sigma^*_K.
\]
\end{theorem}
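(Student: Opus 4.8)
The plan is to prove the two inclusions separately; the less routine one, $\phi(\Gamma^*_K)\subseteq\Sigma^*_K$, is reduced via Theorem~\ref{Theorem_1} to the behaviour of the generators $V_d$, and the reverse inclusion is obtained almost formally from the maximality of $\Gamma^*_K$ together with the fact that $\phi$ is a covering.

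\emph{The inclusion $\phi(\Gamma^*_K)\subseteq\Sigma^*_K$.} Since $\Sigma^*_K$ is a group and $\phi$ a homomorphism, by Theorem~\ref{Theorem_1} it is enough to check $\phi(\Gamma_K)\subseteq\Sigma^*_K$ and $\phi_{V_d}\in\Sigma^*_K$ for every squarefree $d\mid d_K$. For $M\in\Gamma_K=SL_2(\oh_K)$ both $M$ and $\overline M^{tr}$ lie in $\oh^{2\times 2}_K$, so $\phi_M(\Lambda_K)=M\Lambda_K\overline M^{tr}\subseteq\Lambda_K$; applying this to $M^{-1}\in\Gamma_K$ gives $\phi_M(\Lambda_K)=\Lambda_K$. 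For $V_d$ put $A:=\sqrt d\,V_d=\bigl(\begin{smallmatrix} ud & v\omega \\ \overline\omega & d\end{smallmatrix}\bigr)\in\oh^{2\times 2}_K$, so $\phi_{V_d}(H)=\tfrac1d AH\overline A^{tr}$. I would first note that $d\mid\omega\overline\omega$ (this is implicit in $\gcd(d,\omega\overline\omega/d)=1$) and that $d\mid 2m$ (because $d\mid d_K\in\{-m,-4m\}$ is squarefree), hence $d\mid\omega^2=2m\omega-\omega\overline\omega$. Now $A\equiv\bigl(\begin{smallmatrix}0 & v\omega\\ \overline\omega & 0\end{smallmatrix}\bigr)$ and $\overline A^{tr}\equiv\bigl(\begin{smallmatrix}0 & \omega\\ v\overline\omega & 0\end{smallmatrix}\bigr)\pmod d$, so for $H=\bigl(\begin{smallmatrix} a & b\\ \overline b & c\end{smallmatrix}\bigr)\in\Lambda_K$ one gets $AH\overline A^{tr}\equiv\bigl(\begin{smallmatrix} v^2\omega\overline\omega\,c & v\omega^2\overline b \\ v\overline\omega^2 b & \omega\overline\omega\,a\end{smallmatrix}\bigr)\equiv 0\pmod d$ by the two divisibilities; as $\phi_{V_d}(H)$ is Hermitian this shows $\phi_{V_d}(\Lambda_K)\subseteq\Lambda_K$. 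Running the same congruence for $V_d^{-1}=\tfrac1{\sqrt d}\bigl(\begin{smallmatrix} d & -v\omega\\ -\overline\omega & ud\end{smallmatrix}\bigr)$ (which has the same shape mod $d$), or simply using \eqref{gl_3} together with the $\Gamma_K$-case, promotes this to $\phi_{V_d}(\Lambda_K)=\Lambda_K$. Hence $\phi(\Gamma^*_K)\subseteq\Sigma^*_K$.

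\emph{The inclusion $\Sigma^*_K\subseteq\phi(\Gamma^*_K)$.} The group $\Sigma^*_K$ is discrete, being faithfully represented on the lattice $\Lambda_K\cong\Z^4$; and $\phi$ is a surjective homomorphism with finite kernel $\{\pm E\}$, hence a covering map, so $\phi^{-1}(\Sigma^*_K)$ is a discrete subgroup of $SL_2(\C)$. By the previous part it contains $\Gamma_K$, so it is a discrete extension of $\Gamma_K$ inside $SL_2(\C)$; by the maximality of $\Gamma^*_K$ — the very property used in the proof of Theorem~\ref{Theorem_1} — we conclude $\phi^{-1}(\Sigma^*_K)\subseteq\Gamma^*_K$. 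Applying $\phi$ and using that its image is all of $SO_0(V,q)\supseteq\Sigma^*_K$ gives $\Sigma^*_K=\phi(\phi^{-1}(\Sigma^*_K))\subseteq\phi(\Gamma^*_K)$. Together with the first part this proves the theorem.

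The genuinely delicate point is structural rather than computational: the second inclusion relies on $\Gamma^*_K$ being the largest discrete extension of $\Gamma_K$ in $SL_2(\C)$, so one must be sure that $\phi^{-1}(\Sigma^*_K)$ really qualifies — this is exactly where the discreteness of $\Sigma^*_K$ and the properness of $\phi$ enter. An alternative, parallel to the direct proof of Theorem~\ref{Theorem_1}, would instead use Corollary~\ref{Corollary_1}: write $\varphi=\phi_M$ with $M\in\widehat\Gamma_K$, normalise $M\in\tfrac1{\sqrt d}\oh^{2\times 2}_K$ with $d\in\N$ minimal, use $\phi_M(\Lambda_K)=\Lambda_K$ evaluated on a $\Z$-basis of $\Lambda_K$ to show prime by prime that $d$ is a squarefree divisor of $d_K$, replace $M$ by $MV_d^{-1}$ to reduce to $M\in SL_2(K)$, and invoke \cite{EGM}, chap.~7, Lemma~4.4 to get $M\in SL_2(\oh_K)=\Gamma_K$.
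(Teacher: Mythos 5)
Your proof is correct and follows essentially the same route as the paper: the inclusion $\phi(\Gamma^*_K)\subseteq\Sigma^*_K$ via Theorem~\ref{Theorem_1} and the invariance $\phi_{V_d}(\Lambda_K)=\Lambda_K$, and the reverse inclusion by observing that $\phi^{-1}(\Sigma^*_K)$ is a discrete subgroup containing $\Gamma_K$ and invoking the maximality of $\Gamma^*_K$. You merely supply details the paper leaves implicit (the congruence computation for $V_d$ and the discreteness of $\phi^{-1}(\Sigma^*_K)$), and both check out.
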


\begin{proof}
``$\subseteq$'' This is a consequence of the Lemma and Theorem \ref{Theorem_1}, because $\phi_{V_d}(\Lambda_K)$ equals $\Lambda_K$ for all squarefree divisors $d$ of $d_K$.  \\
``$\supseteq$'' $\phi^{-1}(\Sigma^*_K)$ is a discrete subgroup of $SL_2(\C)$ containing $\Gamma_K$ due to the Lemma. As $\Gamma^*_K$ is maximal discrete, we get
\[
\phi^{-1}(\Sigma^*_K) \subseteq \Gamma^*_K.
\]
\vspace*{-7ex}\\
\end{proof}
\vspace{1ex}

Now it is easy to characterize $\Gamma_K$ as the so-called discriminant kernel. For this purpose consider the dual lattice
\begin{align*}
 \Lambda^{\sharp}_K: & = \bigl\{S\in V_K;\;\trace (SH)\in\Z\;\text{for all}\; H\in \Lambda_K\bigr\}  \\
 & = \biggl\{\begin{pmatrix}
                  s_1 & s \\ \overline{s} & s_2
                 \end{pmatrix};\; s_1,s_2\in\Z,\, s\in \tfrac{1}{\sqrt{d_K}}\oh_K \biggr\} \supseteq \Lambda_K.
\end{align*}
Any $\varphi\in \Sigma^*_K$ satisfies $\varphi(\Lambda^{\sharp}_K) = \Lambda^{\sharp}_K$. Hence we may define
\[
 \Sigma_K : = \bigl\{\varphi\in \Sigma^*_K;\; \varphi\;\text{induces id on}\;\Lambda^{\sharp}_K/\Lambda_K\bigr\}.
\]

\begin{corollary}\label{Corollary_2}
For the imaginary quadratic number field $K$, one has
\[
 \phi(\Gamma_K) = \Sigma_K.
\]
\end{corollary}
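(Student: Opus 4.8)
The plan is to reduce the corollary, via Theorems~\ref{Theorem_1} and~\ref{Theorem_2}, to an analysis of how the maps $\phi_M$ act on the finite abelian group $\Lambda^{\sharp}_K/\Lambda_K$. I would isolate two facts. \textbf{(a)} For every $M\in\Gamma_K$ one has $\phi_M\in\Sigma^*_K$ (by Theorem~\ref{Theorem_2}), and $\phi_M$ induces the identity on $\Lambda^{\sharp}_K/\Lambda_K$; thus $\phi_M\in\Sigma_K$, which gives ``$\subseteq$'' at once. \textbf{(b)} For every squarefree divisor $d$ of $d_K$ with $d>1$ the element $\phi_{V_d}\in\Sigma^*_K$ does \emph{not} induce the identity on $\Lambda^{\sharp}_K/\Lambda_K$. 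Granting (a) and (b), ``$\supseteq$'' is purely formal: given $\varphi\in\Sigma_K$, Theorem~\ref{Theorem_2} provides $N\in\Gamma^*_K$ with $\varphi=\phi_N$, and Theorem~\ref{Theorem_1} provides a decomposition $N=MV_d$ with $M\in\Gamma_K$ and $d$ a squarefree divisor of $d_K$; since $\phi_N=\phi_M\circ\phi_{V_d}$ with both $\phi_N=\varphi$ and $\phi_M$ acting as the identity on $\Lambda^{\sharp}_K/\Lambda_K$, so does $\phi_{V_d}$, whence $d=1$ by (b); and as $V_1\in SL_2(\oh_K)=\Gamma_K$ this forces $N\in\Gamma_K$, so $\varphi\in\phi(\Gamma_K)$.

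To prove (a) I would use that $\Lambda^{\sharp}_K=\Lambda_K+\{S_s:\,s\in\tfrac{1}{\sqrt{d_K}}\oh_K\}$, where $S_s:=\left(\begin{smallmatrix}0&s\\\overline{s}&0\end{smallmatrix}\right)$, so that it suffices to verify $\phi_M(S_s)-S_s\in\Lambda_K$ for all such $s$ and all $M=\left(\begin{smallmatrix}\alpha&\beta\\\gamma&\delta\end{smallmatrix}\right)\in\Gamma_K$. Expanding $MS_s\overline{M}^{tr}$ and using $\alpha\delta-\beta\gamma=1$, the off-diagonal entry of $\phi_M(S_s)-S_s$ works out to $\alpha(\overline{\delta}-\delta)s+\beta(\gamma s+\overline{\gamma}\,\overline{s})$ and its two diagonal entries to $\operatorname{trace}_{K/\Q}(\alpha\overline{\beta}s)$ and $\operatorname{trace}_{K/\Q}(\gamma\overline{\delta}s)$. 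The single arithmetic ingredient is that $\oh_K=\Z[\theta]$ with $\theta-\overline{\theta}=\pm\sqrt{d_K}$, so that $z-\overline{z}\in\sqrt{d_K}\,\oh_K$ for every $z\in\oh_K$ (and in particular $\operatorname{trace}_{K/\Q}\bigl(\tfrac{1}{\sqrt{d_K}}\oh_K\bigr)\subseteq\Z$); this makes each of the three expressions lie in $\oh_K$, resp.\ in $\Z$, which is exactly $\phi_M(S_s)-S_s\in\Lambda_K$.

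To prove (b) I would test the single class of $S_s$ with $s=1/\sqrt{d_K}$. A short computation with $V_d=\tfrac{1}{\sqrt{d}}\left(\begin{smallmatrix}ud&v\omega\\\overline{\omega}&d\end{smallmatrix}\right)$, replacing $ud-1$ by $v\omega\overline{\omega}/d$ via~\eqref{gl_1}, shows that the off-diagonal entry of $\phi_{V_d}(S_s)-S_s$ equals $\tfrac{v\omega}{d}\operatorname{trace}_{K/\Q}(\overline{\omega}s)=\tfrac{v\omega}{d}\,c$, where $c=\operatorname{trace}_{K/\Q}(\overline{\omega}/\sqrt{d_K})$ is $-2$ if $d_K=-m$ and $-1$ if $d_K=-4m$. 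Reading~\eqref{gl_1} modulo each prime dividing $d$ and using $\gcd(d,\omega\overline{\omega}/d)=1$ yields $\gcd(v,d)=1$; combined with $\omega=m+\sqrt{-m}$, the squarefreeness of $d$, and the explicit shape of $\oh_K$ in either case, this forces $\tfrac{v\omega c}{d}\notin\oh_K$ as soon as $d>1$. Since $\phi_{V_d}(\Lambda^{\sharp}_K)=\Lambda^{\sharp}_K$, we get $\phi_{V_d}(S_s)-S_s\in\Lambda^{\sharp}_K\setminus\Lambda_K$, so $\phi_{V_d}$ moves the class of $S_s$; this is (b).

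I do not expect any genuine obstacle here: the reduction via Theorems~\ref{Theorem_1} and~\ref{Theorem_2} is formal, and (a) and (b) are short direct computations. The only point needing slight care is the bookkeeping in (b), which branches according to whether $d_K=-m$ or $d_K=-4m$ (i.e.\ on $m\bmod 4$), since this fixes both the value of $c$ and whether $\oh_K=\Z[(1+\sqrt{-m})/2]$ or $\oh_K=\Z[\sqrt{-m}]$; but with the test element $s=1/\sqrt{d_K}$ and the coprimality $\gcd(v,d)=1$ in hand both branches are immediate, and the case of even $d$ causes no extra difficulty. If one prefers to avoid the computation behind (b), there is a counting alternative: since $\ker\phi=\{\pm E\}\subseteq\Gamma_K$, Theorem~\ref{Theorem_2} and the Proposition give $[\Sigma^*_K:\phi(\Gamma_K)]=[\Gamma^*_K:\Gamma_K]=2^{\nu}$, so by (a) it suffices to know that the $2^{\nu}$ cosets $\phi_{V_d}\,\phi(\Gamma_K)$ remain pairwise distinct modulo $\Sigma_K$, and by~\eqref{gl_3} and~\eqref{gl_4} this reduces once more to (b).
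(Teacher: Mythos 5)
Your proposal is correct and follows exactly the paper's own strategy: establish that $\phi_M\in\Sigma_K$ for $M\in\Gamma_K$, that $\phi_{V_d}\in\Sigma_K$ if and only if $d=1$, and then conclude via the Lemma and Theorem~\ref{Theorem_1}. The only difference is that the paper dismisses both verifications as ``easy'' or ``a simple calculation,'' whereas you carry them out explicitly (and correctly, including the coprimality $\gcd(v,d)=1$ and the case split on $d_K$).
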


\begin{proof}
For $M\in\Gamma_K$ it is easy to verify that $\phi_M\in \Sigma_K$. If $d\,|\,d_K$ is squarefree, a simple calculation yields
\[
 \phi_{V_d} \in \Sigma_K \Leftrightarrow d=1.
\]
Thus $\phi_M\in \Sigma_K$, $M\in \Gamma^*_K$, holds if and only if $M\in \Gamma_K$, due to the Lemma in combination with Theorem \ref{Theorem_1}.
\end{proof}
\vspace{1ex}

\begin{remark}\label{Remark_2}
a) Similar results hold for the group
\[
 \Gamma_0(N):=\biggl\{\left(\begin{smallmatrix}
                            \alpha & \beta \\ \gamma & \delta
                           \end{smallmatrix}\right)
\in SL_2(\Z);\; \gamma \equiv 0 \!\!\!\pmod{N}\biggr\}
\]
instead of $\Gamma_K$ (cf. \cite{K1}, sect. 6) and moreover in the case of the paramodular group of degree $2$ (cf. \cite{GKr}, \cite{Koe}).  \\
b) Considering the Hermitian modular group of degree $2$ over $K$ (cf. \cite{B}) the group $\Gamma^*_K$ also appears, if one wants to compute its maximal normal extension (cf. \cite{We}).
\end{remark}



\bibliographystyle{plain}
\renewcommand{\refname}{Bibliography}
\bibliography{bibliography_krieg} 

\begin{thebibliography}{1}

\bibitem{AL}
{Atkin, A.O.L. and J. Lehner}.
\newblock {Hecke operators on $\Gamma_0(m)$.}
\newblock {\em Math. Ann.}, 185:134--160, 1970.

\bibitem{B}
{Braun, H.}
\newblock {Hermitian modular functions.}
\newblock {\em Math. Ann.}, 50:827--855, 1949.

\bibitem{EGM}
{Elstrodt, J., F. Grunewald} and J.~Mennicke.
\newblock {\em {Groups acting on Hyperbolic Space.}}
\newblock Springer-Verlag, Berlin, 1998.

\bibitem{GKr}
{Gallenkämper, J.} and A.~Krieg.
\newblock {The Hecke algebras for the orthogonal group $SO(2,3)$ and the
  paramodular group of degree $2$}.
\newblock {\em Int. J. Number Theory}, 24:2409--2423, 2018.

\bibitem{Hn}
{Hein, W.}
\newblock {\em {Einführung in die Struktur- und Darstellungstheorie der
  klassischen Gruppen.}}
\newblock Springer-Verlag, Berlin, 1990.

\bibitem{Koe}
{K\"ohler, G.}
\newblock {Erweiterungsfähigkeit paramodularer Gruppen.}
\newblock {\em Nachr. Akad. Wiss. Göttingen}, 1967:229--238, 1968.

\bibitem{K1}
{Krieg, A.}
\newblock {Integral orthogonal groups}.
\newblock In T.~Hagen et~al., editors, {\em Dynamical systems, number theory
  and applications}, pages 177--195. {World Scientific, Hackensack, N.J.}
  edition, 2016.

\bibitem{We}
{Wernz, A.}
\newblock {On Hermitian modular groups and modular forms.}
\newblock PhD thesis, RWTH Aachen, 2019.

\end{thebibliography}

\end{document}